\begin{document}

\title{Dual Quaternions and Dual Quaternion Vectors}
\author{ Liqun Qi\footnote{%
    Department of Applied Mathematics, The Hong Kong Polytechnic University, Hung Hom,
    Kowloon, Hong Kong;
    Department of Mathematics, School of Science, Hangzhou Dianzi University, Hangzhou 310018 China
    ({\tt maqilq@polyu.edu.hk}).   This author's work was supported by Hong Kong Innovation and Technology Commission (InnoHK Project CIMDA).}
    \and \
    Chen Ling\thanks{Department of Mathematics, Hangzhou Dianzi University, Hangzhou, 310018, China; ({\tt
macling@hdu.edu.cn}).  This author's work was supported by Natural Science Foundation of China (No. 11971138) and Natural Science Foundation of Zhejiang Province (No. LY19A010019, LD19A010002).}
  \and and \
Hong Yan\thanks{Department of Electrical Engineering, City University of Hong Kong, Kowloon, Hong Kong.  This author's work was supported by Hong Kong Innovation and Technology Commission (InnoHK Project CIMDA) and City University of Hong Kong (Project 9610034). }
    %Ziyan Luo\footnote{Corresponding author, Department of Mathematics,
  %Beijing Jiaotong University, Beijing 100044, China. ({\tt zyluo@bjtu.edu.cn}). This author's work was supported by Beijing Natural Science Foundation (Grant No. Z190002).}
}
\date{\today}
\maketitle

\begin{abstract}
We introduce a total order and an absolute value function for dual numbers.   The absolute value function of dual numbers takes dual number values, and has properties similar to the those of the absolute value function of real numbers.  %We show that the sum of the product of a quaternion and the conjugate of another quaternion, and the product of the other quaternion and the conjugate of that quaternion, is a real number.
We define the magnitude of a dual quaternion, as a dual number.    Based upon these, we extended $1$-norm, $\infty$-norm and $2$-norm to dual quaternion vectors.

\medskip

 % \medskip

  \textbf{Key words.} Dual number, absolute value function, dual quaternion, magnitude, norm.

  %\medskip
 % \textbf{AMS subject classifications. }
\end{abstract}

\renewcommand{\Re}{\mathds{R}}
\newcommand{\rank}{\mathrm{rank}}
\renewcommand{\span}{\mathrm{span}}
\newcommand{\X}{\mathcal{X}}
\newcommand{\A}{\mathcal{A}}
\newcommand{\I}{\mathcal{I}}
\newcommand{\B}{\mathcal{B}}
\newcommand{\C}{\mathcal{C}}
\newcommand{\OO}{\mathcal{O}}
\newcommand{\e}{\mathbf{e}}
\newcommand{\0}{\mathbf{0}}
\newcommand{\dd}{\mathbf{d}}
\newcommand{\ii}{\mathbf{i}}
\newcommand{\jj}{\mathbf{j}}
\newcommand{\kk}{\mathbf{k}}
\newcommand{\va}{\mathbf{a}}
\newcommand{\vb}{\mathbf{b}}
\newcommand{\vc}{\mathbf{c}}
\newcommand{\vg}{\mathbf{g}}
\newcommand{\vr}{\mathbf{r}}
\newcommand{\vt}{\rm{vec}}
\newcommand{\vx}{\mathbf{x}}
\newcommand{\vy}{\mathbf{y}}
\newcommand{\vu}{\mathbf{u}}
\newcommand{\vv}{\mathbf{v}}
\newcommand{\y}{\mathbf{y}}
\newcommand{\vz}{\mathbf{z}}
\newcommand{\T}{\top}

\newtheorem{Thm}{Theorem}[section]
\newtheorem{Def}[Thm]{Definition}
\newtheorem{Ass}[Thm]{Assumption}
\newtheorem{Lem}[Thm]{Lemma}
\newtheorem{Prop}[Thm]{Proposition}
\newtheorem{Cor}[Thm]{Corollary}
\newtheorem{example}[Thm]{Example}

\section{Introduction}

Quaternions were introduced by Hamilton in 1843 \cite{Ha43}.   In 1873, Clifford \cite{Cl73} introduced dual numbers, dual complex numbers and dual quaternions.   This results in a new branch of algebra - geometric algebra or Clifford algebra.    Now, dual numbers, dual complex numbers and dual quaternions have found wide applications in automatic differentiation, geometry, mechanics, rigid body motions, robotics and computer graphics \cite{BK20, BLH19, CKJC16, Da99, Gu11, MKO14, WYL12}.

However, there are some gaps between the applications of dual quaternions and their mathematical foundations.   For example, unit dual quaternions play an important role to represent the motion of a rigid body in 3D \cite{BK20, CKJC16, Ke12}.  This involves the definition of the magnitude or norm of a dual quaternion.  In \cite{Ke12}, the magnitude of a dual quaternion is defined as
$$\|q\| = qq^*,$$
where $q^*$ is the conjugate of $q$.  In the above definition, the left side has only one $q$ factor, while the right side has two $q$ factors.  This is not consistent.  It is better to change it to
$$\|q\|^2 = qq^*.$$
Then
$$\|q\| = \sqrt{qq^*}.$$
This is not well-defined if $q$ is infinitesimal.  Also, now, $\|q\|$ is a dual number.  Does $\|\cdot \|$ still obeys the triangular inequality?   Some investigations are needed.

In the next section, we introduce a total order for dual numbers.   We also define the square root function for dual numbers.

Then, in Section 3, we define the absolute value function for dual numbers.   We see that it inherits many properties of the absolute value function of real numbers.

We show in Section 4 that  the sum of the product of a quaternion and the conjugate of another quaternion, and the product of the other quaternion and the conjugate of that quaternion, is a real number.

We define in Section 5 the magnitude of a dual quaternion, as a dual number.    This definition matches the definition of unit dual quaternions in applications.

Based upon these, in Section 6, we extend $1$-norm, $\infty$-norm and $2$-norm to dual quaternion vectors.  The first two extensions are direct, while the third extension is nontrivial.

Some final remarks are made in Section 7.

We denote scalars, vectors and matrices by small letters, bold small letters and capital letters, respectively.

\section{A Total Order for Dual Numbers}

Denote $\mathbb R$ and $\mathbb D$ as the set of the real numbers, and the set of the dual numbers, respectively.   A dual number $q$ has the form $q = q_{st} + q_\I\epsilon$, where $q_{st}$ and $q_\I$ are real numbers,  and $\epsilon$ is the infinitesimal unit, satisfying $\epsilon^2 = 0$.   We call $q_{st}$ the real part or the standard part of $q$, and $q_\I$ the dual part or the infinitesimal part of $q$.  The infinitesimal unit $\epsilon$ is commutative in multiplication with real numbers, complex numbers and quaternion numbers.  The dual numbers form a commutative algebra of dimension two over the reals.    If $q_{st} \not = 0$, we say that $q$ is appreciable, otherwise, we say that $q$ is infinitesimal.

We may define a total order $\le$ over $\mathbb D$.   Suppose $p = p_{st} + p_\I\epsilon, q = q_{st} + q_\I\epsilon \in \mathbb D$.  We have $q < p$ if $q_{st} < p_{st}$, or $q_{st} = p_{st}$ and $q_\I < p_\I$.  We have $q = p$ if $q_{st} = p_{st}$ and $q_\I = p_\I$.   Thus, if $q > 0$, we say that $q$ is a positive dual number; and if $q \ge 0$, we say that $q$ is a nonnegative dual number.  Denote the set of nonnegative dual numbers by $\mathbb D_+$, and the set of positive dual numbers by $\mathbb D_{++}$.

For $p = p_{st} + p_\I\epsilon, q = q_{st} + q_\I\epsilon \in \mathbb D$ and a positive integer $k$, we have
\begin{equation} \label{e1}
p + q = p_{st}+q_{st} +(p_\I+q_\I)\epsilon,
\end{equation}
\begin{equation} \label{e2}
pq = p_{st}q_{st} +(p_{st}q_\I+p_\I q_{st})\epsilon,
\end{equation}
\begin{equation} \label{e3}
q^k = q_{st}^k + kq_{st}^{k-1}q_\I \epsilon.
\end{equation}

Then we have the following theorem.

\begin{Thm}
For any $p, q \in \mathbb D$ and a positive integer $k$, we have the following conclusions.

1. $q^{2k} \in \mathbb D_+$;

2. $p^2 + q^2 - 2pq \in \mathbb D_+$;

3. If $p, q \in \mathbb D_+$, then $pq \in \mathbb D_+$;

4. If $p, q \in \mathbb D_{++}$ and at least one of them is appreciable, then $pq \in \mathbb D_{++}$
\end{Thm}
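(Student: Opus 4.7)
The plan is to treat each part by direct computation using the identities \eqref{e1}--\eqref{e3}, together with the case split based on whether the standard part vanishes, which is exactly how the total order on $\mathbb D$ is defined.

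For part 1, I would apply \eqref{e3} to write $q^{2k} = q_{st}^{2k} + 2k\, q_{st}^{2k-1} q_\I\,\epsilon$. If $q_{st} \neq 0$, the standard part $q_{st}^{2k}$ is strictly positive, which is already enough to conclude $q^{2k} > 0$ under the order. If $q_{st} = 0$, then because $k \ge 1$ both the standard and infinitesimal coefficients vanish and $q^{2k} = 0$, which is still in $\mathbb D_+$.

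For part 2, the cleanest route is to recognize $p^2 + q^2 - 2pq = (p-q)^2$, which is legitimate because $\mathbb D$ is a commutative ring. Setting $r = p - q \in \mathbb D$ and invoking part 1 with $k = 1$ immediately gives $(p-q)^2 \in \mathbb D_+$.

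For parts 3 and 4, I would expand $pq$ by \eqref{e2} and split on the standard parts. For part 3, if $p_{st}, q_{st} > 0$ then $(pq)_{st} = p_{st}q_{st} > 0$ and there is nothing more to check. If $p_{st} = 0$ then $p \ge 0$ forces $p_\I \ge 0$, and a sub-split on whether $q_{st} > 0$ or $q_{st} = 0$ (in which case $q_\I \ge 0$) shows the dual coefficient $p_{st}q_\I + p_\I q_{st}$ is nonnegative with vanishing standard part, so $pq \in \mathbb D_+$; the case $q_{st} = 0$ is symmetric. For part 4, WLOG assume $p$ is the appreciable one, so $p_{st} > 0$, and $q > 0$ strictly. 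Either $q_{st} > 0$, giving $(pq)_{st} = p_{st}q_{st} > 0$, or $q_{st} = 0$ with $q_\I > 0$, in which case $(pq)_{st} = 0$ and the dual coefficient equals $p_{st}q_\I > 0$; by the definition of the total order, $pq > 0$ in both situations.

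The only real obstacle is bookkeeping in the case analysis for parts 3 and 4: one has to remember that an element of $\mathbb D_+$ with vanishing standard part must have nonnegative dual part, and that in part 4 the appreciability hypothesis is exactly what rules out the degenerate case where both standard parts vanish (which would give $pq = 0 \notin \mathbb D_{++}$, e.g.\ $p = q = \epsilon$, showing the hypothesis is sharp).
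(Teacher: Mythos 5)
Your proposal is correct and follows essentially the same route as the paper: apply \eqref{e3} and a sign analysis of $q_{st}^{2k}$ for part 1, factor $p^2+q^2-2pq=(p-q)^2$ and invoke part 1 for part 2, and run the case split on vanishing standard parts via \eqref{e2} for parts 3 and 4. You in fact supply slightly more than the paper, which dismisses part 4 with ``proved similarly to 3''; your explicit WLOG argument and the sharpness example $p=q=\epsilon$ are both correct.
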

\begin{proof}  1. By (\ref{e3}), we have
$$q^{2k} = q_{st}^{2k} + 2kq_{st}^{2k-1}q_\I \epsilon.$$
If $q_{st} \not = 0$, then $q_{st}^{2k} > 0$.   This implies $q^{2k} > 0$.   If $q_{st} = 0$, then $q^{2k} = 0$.

2.  By 1, we have $p^2 + q^2 - 2pq = (p-q)^2 \ge 0$.

3. If $p_{st} > 0$ and $q_{st} > 0$, then $p_{st}q_{st} > 0$.   By (\ref{e2}), $pq > 0$.  If $p_{st} = 0$ and $q_{st} > 0$, then $p_\I \ge 0$ as $p \ge 0$.  By (\ref{e2}), $pq = p_\I q_{st}\epsilon \ge 0$.   Similarly, if $p_{st} > 0$ and $q_{st} = 0$, then by (\ref{e2}), $pq = p_{st}q_\I \epsilon \ge 0$. If $p_{st} = q_{st} = 0$, then by (\ref{e2}), $pq = 0$.

4. This may be proved similarly to 3.

\end{proof}

Clearly, many inequalities of real numbers can be extended to dual numbers without difficulty.

For $p, q \in \mathbb D$, suppose that $p \le q$.  Then we may define
$$[p, q] = \{ u \in \mathbb D : p \le u \le q \},$$
$$[p, +\infty) = \{ u \in \mathbb D : p \le u \},$$
$$(p, +\infty) = \{ u \in \mathbb D : p < u \},$$
$$(-\infty, q] = \{ u \in \mathbb D : u \le q \},$$
$$(-\infty, q) = \{ u \in \mathbb D : u < q \}.$$
If furthermore $p < q$, then we may define
$$[p, q) = \{ u \in \mathbb D : p \le u < q \},$$
$$(p, q] = \{ u \in \mathbb D : p < u \le q \},$$
$$(p, q) = \{ u \in \mathbb D : p < u < q \}.$$

If $q$ is appreciable, then $q$ is invertible and
$$q^{-1} = q_{st}^{-1} - q_{st}^{-1}q_\I q_{st}^{-1}\epsilon.$$
If $q$ is infinitesimal, then $q$ is not invertible.

If $q$ is nonnegative and appreciable, then the square root of $q$ is still a nonnegative dual number.   If $q$ is positive and appreciable, we have
\begin{equation} \label{e4}
\sqrt{q} = \sqrt{q_{st}} + {q_\I \over 2\sqrt{q_{st}}}\epsilon.
\end{equation}
When $q=0$, we have $\sqrt{q} = 0$.

\section{The Absolute Value Function of Dual Numbers}

Recall that for any $u \in \mathbb R$,
$${\rm sgn}(u) =  \left\{ \begin{aligned} 1, & \ {\rm if}\  u > 0, \\ 0, &   \ {\rm if}\  u = 0, \\
-1, &   \ {\rm if}\  u < 0.  \end{aligned}  \right. $$

We define the absolute value of $q \in \mathbb D$ as
\begin{equation} \label{e5}
|q| = \left\{ \begin{aligned} |q_{st}| + {\rm sgn}(q_{st})q_\I\epsilon, & \ {\rm if}\  q_{st} \not = 0, \\ |q_\I|\epsilon, &   \ {\rm otherwise}.  \end{aligned}  \right.
\end{equation}

We have the following theorem.

\begin{Thm}
The mapping $|\cdot| : \mathbb D \to \mathbb D_+$.   Suppose that $p, q \in \mathbb D$.   Then,

1. $|q| = 0$ if and only if $q = 0$;

2. $|q| = q$ if $q \ge 0$, $|q| > q$ otherwise;

3. $|q| = \sqrt{q^2}$ if $q$ is appreciable;

4. $|pq| = |p|  |q|$;

5. $|p+q| \le |p| + |q|$.

\end{Thm}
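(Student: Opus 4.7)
The plan is to prove each of the five claims by case analysis on whether the standard parts $p_{st}$ and $q_{st}$ vanish, reducing in each case to facts about the real absolute value together with the formulas (\ref{e1})--(\ref{e4}) and the total order of Section~2.

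For the codomain claim and items (1)--(3): if $q_{st} \ne 0$, then $|q|$ has positive standard part $|q_{st}|$, hence $|q| \in \mathbb{D}_{++}$; if $q_{st} = 0$, then $|q| = |q_\I|\epsilon \in \mathbb{D}_+$. Claims (1) and (2) then follow by inspecting the two branches of (\ref{e5}) together with the definition of the total order: $|q| = 0$ forces both $q_{st} = 0$ and $q_\I = 0$, and the sign comparison of $|q|$ with $q$ reduces to a comparison on standard and infinitesimal parts separately. For (3), when $q$ is appreciable, (\ref{e3}) gives $q^2 = q_{st}^2 + 2q_{st}q_\I\epsilon$ with $q_{st}^2 > 0$ appreciable, so (\ref{e4}) yields $\sqrt{q^2} = |q_{st}| + \frac{2q_{st}q_\I}{2|q_{st}|}\epsilon = |q_{st}| + {\rm sgn}(q_{st})\, q_\I\epsilon = |q|$.

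Item (4) splits into four cases by whether each of $p_{st}, q_{st}$ vanishes. When both are nonzero, expand $pq$ via (\ref{e2}) and use the real identity ${\rm sgn}(p_{st}q_{st}) = {\rm sgn}(p_{st}){\rm sgn}(q_{st})$ to match $|pq|$ with $|p||q|$; the remaining three cases are direct computations since $pq$ is then infinitesimal and $\epsilon^2 = 0$ kills any cross terms.

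The main obstacle is (5), the triangle inequality, which requires a finer case split. The genuinely interesting regime is when $p_{st}$ and $q_{st}$ are both nonzero: if they have the same sign, both sides of $|p+q| \le |p|+|q|$ agree after a direct computation; if they have opposite signs with $p_{st}+q_{st} \ne 0$, then $|p_{st}+q_{st}| < |p_{st}|+|q_{st}|$ strictly in $\mathbb{R}$, so the inequality holds already at the level of standard parts regardless of the dual parts; and if $p_{st}+q_{st} = 0$ (with both nonzero), then $|p+q|$ is infinitesimal while $|p|+|q|$ has positive standard part $2|p_{st}|$. The remaining degenerate cases reduce either to the real triangle inequality on dual parts (when $p_{st} = q_{st} = 0$) or to the elementary estimate ${\rm sgn}(p_{st})\, q_\I \le |q_\I|$ (when exactly one standard part vanishes). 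The conceptual point exploited throughout is that the lexicographic total order compares standard parts first, so strict dominance in the standard part makes the dual-part comparison automatic.
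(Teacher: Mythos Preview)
Your proposal is correct and follows essentially the same approach as the paper: both arguments handle (1)--(3) by direct inspection of the definition together with (\ref{e3})--(\ref{e4}), and both prove (4) and (5) by the same four-way case split on whether $p_{st}$ and $q_{st}$ vanish. Your treatment of (5) in the case $p_{st}\ne 0$, $q_{st}\ne 0$ is in fact slightly more careful than the paper's---you explicitly separate same sign, opposite sign with $p_{st}+q_{st}\ne 0$, and $p_{st}+q_{st}=0$, whereas the paper compresses these into the single remark that $|p_{st}+q_{st}|\le|p_{st}|+|q_{st}|$---but this is a difference in exposition, not in method.
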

\begin{proof}
By definition, $|\cdot| : \mathbb D \to \mathbb D_+$, and  we may verify conclusions 1 and 2 directly.

Suppose that $q$ is appreciable.  Then $q_{st} \not = 0$.  We have $q^2 = q_{st}^2 + 2q_{st}q_\I \epsilon$.    This implies that
$$\sqrt{q^2} = \sqrt{q_{st}^2} + {2q_{st}q_\I \over 2\sqrt{q_{st}^2}}\epsilon = |q_{st}| + {\rm sgn}(q_{st})q_\I\epsilon = |q|.$$
We have conclusion 3.

We have
$$pq = p_{st}q_{st} + (p_{st}q_\I + p_\I q_{st})\epsilon.$$
Then,
$$|pq| = \left\{ \begin{aligned} |p_{st}q_{st}| + {\rm sgn}(p_{st}q_{st})(p_{st}q_\I + p_\I q_{st})\epsilon, & \ {\rm if}\  p_{st}q_{st} \not = 0, \\ |p_{st}q_\I + p_\I q_{st}|\epsilon, &   \ {\rm otherwise}.  \end{aligned}  \right. $$

If $p_{st} \not = 0$ and $q_{st} \not = 0$, then
$${\rm sgn}(p_{st}q_{st}) = {\rm sgn}(p_{st}){\rm sgn}(q_{st}).$$
We have
\begin{eqnarray*}
|pq| & = & |p_{st}q_{st}| + {\rm sgn}(p_{st}q_{st})(p_{st}q_\I + p_\I q_{st})\epsilon \\
& = & |p_{st}| |q_{st}| + |p_{st}|{\rm sgn}(q_{st})q_\I \epsilon + {\rm sgn}(p_{st})p_\I |q_{st}|\epsilon \\
& = & \left(|p_{st}|+ {\rm sgn}(p_{st})p_\I \epsilon\right)\left(|q_{st}|+ {\rm sgn}(q_{st})q_\I \epsilon\right)\\
& = & |p| |q|.
\end{eqnarray*}

If $p_{st} = 0$ and $q_{st} \not = 0$, then $pq = p_\I q_{st}\epsilon$.   This implies that $|pq| = |p_\I | |q_{st}|\epsilon$.  On the other hand, we have $|p| = |p_\I |\epsilon$ and $|q| = |q_{st}| + {\rm sgn}(q_{st})q_\I \epsilon$.   Therefore,
$$|p||q|= |p_\I| |q_{st}|\epsilon = |pq|.$$

Similarly, if $p_{st} \not = 0$ and $q_{st} = 0$, then we have $|p| |q| = |pq|$.

If $p_{st} = q_{st} = 0$, then $pq = 0$, $|p| = |p_\I |\epsilon$ and $|q| = |q_\I |\epsilon$.  We still have $|pq|= |p| |q|$.    Hence, conclusion 4 holds.

We now show conclusion 5.  We have
$$|p + q| = |(p_{st}+p_\I \epsilon) + (q_{st}+ q_\I \epsilon)|= \left\{ \begin{aligned} |p_{st} + q_{st}| + {\rm sgn}(p_{st}+q_{st})(p_\I + q_\I)\epsilon, & \ {\rm if}\  p_{st} + q_{st} \not = 0, \\
|p_\I + q_\I|\epsilon, &  \ {\rm otherwise}.
\end{aligned} \right.$$

If $p_{st} \not = 0$ and $q_{st} \not = 0$, then
$$|p| + |q|= |p_{st}|+ |q_{st}| + ({\rm sgn}(p_{st})p_\I + {\rm sgn}(q_{st})q_\I) \epsilon.$$
We have $|p+q| \le |p|+|q|$, as $|p_{st}|+ |q_{st}| > 0$ and $|p_{st}+q_{st}| \le |p_{st}|+ |q_{st}|$.

If $p_{st} = 0$ and $q_{st} \not = 0$, then
$$|p| + |q|= |q_{st}| + (|p_\I| + {\rm sgn}(q_{st})q_\I) \epsilon.$$
We have
$$|p + q| = |q_{st}| + {\rm sgn}(q_{st})(p_\I + q_\I) \epsilon \le |p| + |q|.$$

Similarly, if $p_{st} \not = 0$ and $q_{st} = 0$, then we have $|p+q| \le |p|+|q|$.

If $p_{st} = q_{st} = 0$, then
$$|p + q| =  |p_\I + q_\I| \epsilon \le (|p_\I| + |q_\I|) \epsilon = |p| + |q|.$$

Thus, in any case, we have $|p+q| \le |p| + |q|$.
\end{proof}

We see that the absolute value concept of dual numbers extends the absolute value concept of real numbers.

In fact, for $p, q \in \mathbb D$, $|p-q|$ defines the distance between $p$ and $q$.  This distance satisfies the triangular formula, and provides the basis for dual analysis.

Based on this distance measurement, we can define limits and continuous dual functions.   A function $f : (a, b) \to \mathbb D$, where $a, b \in \mathbb D$, $a < b$, is called a dual function.   Then we see that the properties of dual functions are different from real functions.    Consider $f(x) = x^2 - \epsilon$, defined on $\mathbb D$.   We have $f(0) = -\epsilon < 0$ and $f(1) = 1 - \epsilon > 0$.   However, we cannot find $x \in [0, 1] \subset \mathbb D$ such that $f(x) = 0$.

%In this section, we collect some basic knowledge about quaternions and quaternion matrices \cite{Ro14, WLZZ18, Zh97}.

\section{A Mixed Product Sum of Two Quaternions}

Denote $\mathbb Q$ as the set of the quaternions.
A quaternion $q$ has the form
$q = q_0 + q_1\ii + q_2\jj + q_3\kk,$
where $q_0, q_1, q_2$ and $q_3$ are real numbers, $\ii, \jj$ and $\kk$ are three imaginary units of quaternions, satisfying
$\ii^2 = \jj^2 = \kk^2 =\ii\jj\kk = -1,$
$\ii\jj = -\jj\ii = \kk, \ \jj\kk = - \kk\jj = \ii, \kk\ii = -\ii\kk = \jj.$
The real part of $q$ is Re$(q) = q_0$.   The imaginary part of $q$ is Im$(q) = q_1\ii + q_2\jj +q_3\kk$.
A quaternion is called imaginary if its real part is zero.
The multiplication of quaternions satisfies the distribution law, but is noncommutative.

The conjugate of $q = q_0 + q_1\ii + q_2\jj + q_3\kk$ is
$q^* = q_0 - q_1\ii - q_2\jj - q_3\kk.$
The magnitude of $q$ is
$|q| = \sqrt{q_0^2+q_1^2+q_2^2+q_3^2}.$
It follows that the inverse of a nonzero quaternion $q$ is given by
$q^{-1} = {q^* / |q|^2}.$ For any two quaternions $p$ and $q$, we have $(pq)^* = q^*p^*$.

By Theorem 2.1 of \cite{Zh97}, we have the following proposition.

\begin{Prop} \label{p4.1}
For any $p = p_0 + p_1\ii + p_2\jj + p_3\kk, q = q_0 + q_1\ii + q_2\jj + q_3\kk \in \mathbb Q$, we have

1. $|q| = |q^*|$;

2. $q^*q = qq^* = |q|^2 = q_0^2+q_1^2+q_2^2+q_3^2$;

3. $|q| = 0$ if and only if $q = 0$;

4. $|p+q| \le |p|+|q|$;

5. $|pq| = |p| |q|$.
\end{Prop}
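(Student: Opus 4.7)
The plan is to verify the five conclusions by direct computation from the quaternion multiplication table, in an order that lets later parts build on earlier ones. Everything reduces to arithmetic in $\mathbb R^4$ using only $\ii^2 = \jj^2 = \kk^2 = -1$ and the cyclic rule $\ii\jj = \kk$, etc.; no external facts beyond the definition of the conjugate and of $|q|$ are needed.

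First I would dispatch conclusions 1, 2 and 3 together. Conclusion 1 is immediate from the formula $|q| = \sqrt{q_0^2+q_1^2+q_2^2+q_3^2}$, since $q^*$ only flips the signs of $q_1, q_2, q_3$. For conclusion 2, I would expand $q^*q$ (and symmetrically $qq^*$) term by term: the cross terms split into pairs of the form $-q_iq_j\ii\jj - q_jq_i\jj\ii$ that cancel by noncommutativity, while the diagonal terms $q_0^2$ and $-q_i^2\ii^2 = q_i^2$ sum to $q_0^2+q_1^2+q_2^2+q_3^2 = |q|^2$. Conclusion 3 then follows because $|q|^2$ is a sum of four real squares and hence vanishes only when all components are zero.

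For conclusion 5, I would chain conclusion 2 with the identity $(pq)^* = q^*p^*$ already recalled in the text:
\[
|pq|^2 = (pq)(pq)^* = p(qq^*)p^* = |q|^2\, pp^* = |q|^2|p|^2,
\]
where the key point is that the real scalar $|q|^2$ commutes with every quaternion. Taking nonnegative square roots gives $|pq| = |p||q|$.

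The main obstacle is conclusion 4, the triangle inequality, since nothing in the algebra of quaternions yields an inequality directly. The cleanest route is to identify $q = q_0+q_1\ii+q_2\jj+q_3\kk$ with the vector $(q_0,q_1,q_2,q_3) \in \mathbb R^4$: quaternion addition is componentwise and $|\cdot|$ is exactly the Euclidean $2$-norm, so $|p+q| \le |p|+|q|$ reduces to the ordinary triangle inequality in $\mathbb R^4$. If one prefers to argue inside $\mathbb Q$, the same bound follows by expanding
\[
|p+q|^2 = (p+q)(p+q)^* = |p|^2 + |q|^2 + pq^* + qp^*,
\]
observing that $qp^* = (pq^*)^*$ so that $pq^* + qp^* = 2\,\mathrm{Re}(pq^*) \le 2|pq^*|$, and then invoking conclusions 1 and 5 already proved to get $2|pq^*| = 2|p||q|$, which rearranges to $(|p|+|q|)^2$. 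I would present the Euclidean-embedding argument in the final writeup, since it isolates the only nonalgebraic step and keeps the proposition entirely elementary.
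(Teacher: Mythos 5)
Your proof is correct, but it is worth noting that the paper itself offers no proof at all: it simply cites Theorem 2.1 of Zhang's 1997 survey on quaternions and matrices of quaternions, so there is no argument in the text to compare against. What you have produced is a self-contained elementary verification of those standard facts, and every step checks out. Conclusions 1--3 are indeed immediate from the componentwise expansion; for conclusion 5 the chain $|pq|^2 = p(qq^*)p^* = |q|^2 pp^*$ is valid precisely because $|q|^2$ is a real scalar lying in the center of $\mathbb Q$, which you correctly flag as the key point; and for conclusion 4 both of your routes work --- the Euclidean embedding reduces the claim to the triangle inequality in $\mathbb R^4$, while the algebraic route uses $qp^* = (pq^*)^*$, the bound $\mathrm{Re}(x) \le |x|$, and conclusions 1 and 5 to obtain $|p+q|^2 \le (|p|+|q|)^2$. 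The only mild caution is a logical ordering one: your algebraic version of conclusion 4 invokes conclusion 5, so if you present that variant you must prove 5 before 4, which your writeup already does. Compared with the paper's bare citation, your approach buys a reader-verifiable proof at the cost of a page of routine computation; the authors presumably judged these facts standard enough to outsource.
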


By direct calculation, we have the following theorem.

\begin{Thm} \label{t4.2}
Suppose that $p = p_0 + p_1\ii + p_2\jj + p_3\kk, q = q_0 + q_1\ii + q_2\jj + q_3\kk \in \mathbb Q$, where
$p_0, p_1, p_2, p_3, q_0, q_1, q_2, q_3 \in \mathbb R$.    Then
\begin{equation} \label{e6}
pq^* + qp^* = p^*q + q^*p = 2p_0q_0 + 2p_1q_1 + 2 p_2q_2 + 2p_3q_3,
\end{equation}
which is a real number.
\end{Thm}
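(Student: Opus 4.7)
The plan is to use the basic identity that for any quaternion $r$, one has $r + r^* = 2\mathrm{Re}(r)$, a real number. Since $(pq^*)^* = (q^*)^*p^* = qp^*$, the first sum equals $pq^* + (pq^*)^* = 2\mathrm{Re}(pq^*)$ and is automatically real. Likewise $(p^*q)^* = q^*p$ gives $p^*q + q^*p = 2\mathrm{Re}(p^*q)$. Thus the theorem reduces to proving the single chain of equalities $\mathrm{Re}(pq^*) = \mathrm{Re}(p^*q) = p_0q_0 + p_1q_1 + p_2q_2 + p_3q_3$, which automatically upgrades to the matrix of four identities claimed in (\ref{e6}).

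To compute $\mathrm{Re}(pq^*)$, I would expand $pq^* = (p_0 + p_1\ii + p_2\jj + p_3\kk)(q_0 - q_1\ii - q_2\jj - q_3\kk)$ by distributivity. The only summands that can land in the real part are the scalar product $p_0q_0$ together with the three diagonal products of matching imaginary units: using $\ii^2 = \jj^2 = \kk^2 = -1$, each such term $p_k\cdot(-q_k)\cdot(\text{unit})^2$ contributes $+p_kq_k$. Every other cross term is of the form $p_a(-q_b)$ times a product of two distinct imaginary units, which by Hamilton's relations $\ii\jj=\kk$, $\jj\kk=\ii$, $\kk\ii=\jj$ (and their reverses) is purely imaginary and contributes nothing. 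This yields $\mathrm{Re}(pq^*) = p_0q_0 + p_1q_1 + p_2q_2 + p_3q_3$.

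The computation for $\mathrm{Re}(p^*q)$ is entirely parallel: now the signs from conjugation sit on $p$ rather than on $q$, but the diagonal products again pick up one factor of $-1$ from the squared imaginary unit and one from the conjugation, so they collectively equal $p_kq_k$, while cross terms remain purely imaginary. This gives the second equality and completes the theorem. There is no substantive obstacle here; the entire argument is routine bookkeeping, and the one observation worth making in advance is that reducing via $r + r^* = 2\mathrm{Re}(r)$ roughly halves the amount of expansion needed compared to computing all four products $pq^*$, $qp^*$, $p^*q$, $q^*p$ separately and adding.
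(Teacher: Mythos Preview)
Your proposal is correct. The paper itself offers no proof beyond the phrase ``by direct calculation,'' so your argument is essentially what the paper intends, carried out in full; your use of $r + r^* = 2\mathrm{Re}(r)$ and $(pq^*)^* = qp^*$ is a mild streamlining over expanding all four products separately, but not a genuinely different route.
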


%Two quaternions $p$ and $q$ are said to be similar if there exists a nonzero $u$ such that $u^{-1}pu = q$.   This is denoted as $p \sim q$.  Then $\sim$ is an equivalence relation.  Denote the equivalence class containing $q$ by $[q]$.    If $p \sim q$, then $|p|=|q|$.    Note that if $q$ is real, $[q]$ is a singleton.

%A quaternion $q$ is called a unit quaternion if $|q|=1$.    Let $S^3$ be the set of all unit quaternions, and $S^2$ be the set of all unit imaginary quaternions.   Then, by \cite{Ch98}, for any $q \in S^3$, the Euler formula
%\begin{equation} \label{ee1}
%q = e^{\omega \theta} = \cos \theta + \omega \sin \theta,
%\end{equation}
%still holds, where $\omega \in S^2$ and $\theta \in {\mathbb R}$.

%The following proposition is also not difficult to prove.

%\begin{Prop} \label{p2.2}
%Suppose that $A \in {\mathbb Q}^{n \times n}$ is Hermitian.   Then $A$ has $n$ real right eigenvalues
%$\lambda_1 \ge \cdots, \lambda_n$.   If $\lambda_i \not = \lambda_j$, $\vx$ and $\vy$ are right eigenvectors of $A$, associated with $\lambda_i$ and $\lambda_j$ respectively, then $\vx$ and $\vy$ are orthogonal.  If $\lambda$ is an right eigenvalue of $A$ with multiplicity $k$, then there are $k$ right eigenvectors of $A$, associated with $\lambda$, such that they are orthogonal to each other.
%\end{Prop}

\section{The Magnitude of a Dual Quaternion}

We may denote the set of dual quaternions as $\mathbb {DQ}$.   A dual quaternion $q \in \mathbb {DQ}$ has the form
$$q = q_{st} + q_\I\epsilon,$$
where $q_{st}, q_\I \in \mathbb {Q}$ are the standard part and the infinitesimal part of $q$ respectively. The conjugate of $q$ is
$$q^* = q_{st}^* + q_\I^*\epsilon.$$
See \cite{BK20, CKJC16, Ke12}.
If $q_{st} \not = 0$, then we say that $q$ is appreciable.

We can derive that $q$ is invertible if and only if  $q$ is appreciable. In this case, we have
$$q^{-1} = q_{st}^{-1} - q_{st}^{-1}q_\I q_{st}^{-1} \epsilon.$$

The magnitude of $q$ is defined as
\begin{equation} \label{e7}
|q| := \left\{ \begin{aligned} |q_{st}| + {(q_{st}q_\I^*+q_\I q_{st}^*) \over 2|q_{st}|}\epsilon, & \ {\rm if}\  q_{st} \not = 0, \\
|q_\I|\epsilon, &  \ {\rm otherwise},
\end{aligned} \right.
\end{equation}
which is a dual number.    Note that this definition reduces to the definition of the absolute function if $q \in \mathbb D$, and the definition of the magnitude of a quaternion if $q \in \mathbb Q$.

\begin{Thm} \label{t5.1}
The magnitude $|q|$ is a dual number for any $q \in \mathbb {DQ}$.   If $q$ is appreciable, then
\begin{equation} \label{e8}
|q| = \sqrt{qq^*}.
\end{equation}

For any $p, q \in \mathbb {DQ}$, we have

1. $qq^* = q^*q$;

2. $|q| = |q^*|$;

3. $|q|\ge 0$ for all $q$, and $|q| = 0$ if and only if $q = 0$;

4. $|pq| = |p| |q|$;

5. $|p+q| \le |p|+|q|$.

\end{Thm}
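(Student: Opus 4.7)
The plan is to verify the seven assertions in the order stated, reducing each to results from Sections 2--4. To see that $|q|\in\mathbb{D}$ I invoke Theorem \ref{t4.2} with the two quaternions $q_{st}$ and $q_\I$: the combination $q_{st}q_\I^*+q_\I q_{st}^*$ is a real number, so dividing by the positive real $2|q_{st}|$ produces a real $\epsilon$-coefficient in (\ref{e7}); the case $q_{st}=0$ is automatic. For identity (\ref{e8}), direct expansion yields
\begin{equation*}
qq^* = |q_{st}|^2 + (q_{st}q_\I^*+q_\I q_{st}^*)\epsilon,
\end{equation*}
which is a positive appreciable dual number when $q$ is appreciable, so formula (\ref{e4}) applies and reproduces exactly (\ref{e7}).

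Conclusions 1 and 2 are routine expansions using Proposition \ref{p4.1}(1,2) on standard parts and Theorem \ref{t4.2} on $\epsilon$-coefficients: for $qq^*=q^*q$ both sides reduce to $|q_{st}|^2+(q_{st}q_\I^*+q_\I q_{st}^*)\epsilon$, and for $|q|=|q^*|$ the $\epsilon$-coefficient of $|q^*|$ is $(q_{st}^*q_\I+q_\I^*q_{st})/(2|q_{st}^*|)$, which equals that of $|q|$ by the same proposition and theorem. Conclusion 3 follows immediately from the definition of the total order on $\mathbb{D}$ together with Proposition \ref{p4.1}(3).

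For conclusion 4 I split according to whether $p_{st}$ and $q_{st}$ vanish. When both are nonzero, so is their product (no zero divisors in $\mathbb{Q}$), hence $pq$ is appreciable and I apply the identity
\begin{equation*}
(pq)(pq)^* \ =\ p(qq^*)p^* \ =\ |q|^2\, pp^* \ =\ |p|^2|q|^2,
\end{equation*}
the central equality using that $qq^*=|q|^2\in\mathbb{D}$ lies in the center of $\mathbb{DQ}$ and so commutes with $p$; taking square roots via (\ref{e8}) gives $|pq|=|p||q|$. When at least one standard part vanishes, $pq$ is either $0$ or of the form $(p_\I q_{st})\epsilon$ or $(p_{st}q_\I)\epsilon$, and the identity reduces to Proposition \ref{p4.1}(5) together with $\epsilon^2=0$.

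Conclusion 5, the triangle inequality, is the main obstacle. The standard-part inequality $|p_{st}+q_{st}|\le|p_{st}|+|q_{st}|$ always holds by Proposition \ref{p4.1}(4); when it is strict, the total order on $\mathbb{D}$ immediately forces $|p+q|<|p|+|q|$ and no comparison of $\epsilon$-coefficients is needed. The boundary case requires matching $\epsilon$-parts. Its most interesting subcase is $p_{st}=0$, $q_{st}\ne 0$, where after simplification the required inequality reduces to
\begin{equation*}
q_{st}p_\I^* + p_\I q_{st}^* \ \le\ 2|p_\I|\,|q_{st}|,
\end{equation*}
which is exactly Cauchy--Schwarz in $\mathbb{R}^4$, since by Theorem \ref{t4.2} the left-hand side is twice the Euclidean inner product of $p_\I$ and $q_{st}$ regarded as $4$-vectors. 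The subcase $p_{st}=q_{st}=0$ reduces to the quaternionic triangle inequality for $p_\I+q_\I$; the remaining boundary case, where both standard parts are nonzero and parallel so that $q_{st}=\lambda p_{st}$ for some real $\lambda>0$, yields exact equality of the $\epsilon$-coefficients after direct substitution.
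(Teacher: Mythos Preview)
Your proof is correct and covers all the assertions; for the preliminary claims and conclusions 1--3 you follow essentially the paper's argument.

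The differences lie in conclusions 4 and 5. For conclusion 4 in the appreciable--appreciable case, the paper expands $|p|\,|q|$ and $|pq|$ coordinatewise and reduces their equality to the algebraic identity
\[
d_{st}d_\I^*+d_\I d_{st}^* \;=\; |p_{st}|^2(q_{st}q_\I^*+q_\I q_{st}^*) + |q_{st}|^2(p_{st}p_\I^*+p_\I p_{st}^*),
\]
which is then verified by direct substitution. Your route is cleaner: you observe that $qq^*=|q|^2$ is a dual number, hence central in $\mathbb{DQ}$, and compute $(pq)(pq)^*=p(qq^*)p^*=|q|^2|p|^2$; taking square roots via (\ref{e8}) gives the result without ever comparing $\epsilon$-coefficients. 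This saves the symbolic computation entirely. For conclusion 5, the paper simply announces a four-case split according to which of $p_{st},q_{st}$ vanish and then omits all details. Your organization is different and more informative: you split instead on whether the standard-part inequality $|p_{st}+q_{st}|\le|p_{st}|+|q_{st}|$ is strict (where the lexicographic order on $\mathbb D$ makes the conclusion immediate) or an equality, and in the equality case you exploit the $\mathbb R^4$ characterization of equality in the Euclidean triangle inequality to reduce to the parallel subcase $q_{st}=\lambda p_{st}$ (where the $\epsilon$-coefficients match exactly) and the degenerate subcases (where the comparison becomes Cauchy--Schwarz or the quaternionic triangle inequality). In effect you supply the argument the paper skips, and your decomposition explains \emph{why} the equality case is the only nontrivial one.
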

\begin{proof}
By Theorem \ref{t4.2}, $q_{st}q_\I^*+q_\I q_{st}^*$ is a real number.  As $|q_{st}|$ is also a real number, $|q|$, defined by (\ref{e7}), is a dual number.   If $q$ is appreciable, then $q_{st} \not = 0$.   We have
$$qq^* = q_{st}q_{st}^* + (q_{st}q_\I^*+q_\I q_{st}^*)\epsilon = |q_{st}|^2 + (q_{st}q_\I^*+q_\I q_{st}^*)\epsilon.$$
By (\ref{e4}) and (\ref{e7}), we have (\ref{e8}).

1.   We have
$$qq^* =q_{st}q_{st}^*+(q_\I q_{st}^* + q_{st}q_\I^*)\epsilon$$
and
$$q^*q =q_{st}^*q_{st}+(q_\I^*q_{st}+q_{st}^*q_\I)\epsilon.$$
Then by Proposition \ref{p4.1} and Theorem \ref{t4.2}, we have $qq^* = q^*q$.

2. If $q$ is appreciable, then by (\ref{e8}), $|q| = \sqrt{qq^*}$ and $|q^*| = \sqrt{q^*q}$.   By 1, we have $|q| = |q^*|$.    If $q$ is not appreciable, then $q = q_\I \epsilon$ and $q^* = q_\I^*\epsilon$.
We have $|q| = |q_\I|\epsilon$ and $|q^*| = |q_\I^*|\epsilon$.   By Proposition \ref{p4.1}, $|q_\I| = |q_\I^*|$.   Thus, we also have $|q| = |q^*|$ in this case.

3. By (\ref{e7}), we have the conclusion.

4. Let $d = pq$.   Denote $p = p_{st} + p_\I\epsilon$, $q = q_{st} + q_\I\epsilon$ and $d = d_{st} + d_\I\epsilon$, where $p_{st}, p_\I, q_{st}, q_\I, d_{st}, d_\I \in \mathbb Q$.     Then
$d_{st} = p_{st}q_{st}$, $d_\I = p_\I q_{st}+ p_{st}q_\I$, and
$$|d| = \left\{ \begin{aligned} |d_{st}| + {(d_{st}d_\I^*+d_\I d_{st}^*) \over 2|d_{st}|}\epsilon, & \ {\rm if}\  d_{st} \not = 0, \\
|q_\I|\epsilon, &  \ {\rm otherwise}.
\end{aligned} \right.$$

If $p_{st} \not = 0$ and $q_{st} \not = 0$, then $d_{st}=p_{st}q_{st} \not = 0$.  We have
$$|p| =  |p_{st}| + {(p_{st}p_\I^*+p_\I p_{st}^*) \over 2|p_{st}|}\epsilon,$$
$$|q| =  |q_{st}| + {(q_{st}q_\I^*+q_\I q_{st}^*) \over 2|q_{st}|}\epsilon,$$
$$|pq| = |d| =  |d_{st}| + {(d_{st}d_\I^*+d_\I d_{st}^*) \over 2|d_{st}|}\epsilon.$$
We have
\begin{eqnarray*}
|p| |q| & = & |p_{st}||q_{st}| +\left({|p_{st}|(q_{st}q_\I^*+q_\I q_{st}^*) \over 2|q_{st}|} +
{|q_{st}|(p_{st}p_\I^*+p_\I p_{st}^*) \over 2|p_{st}|}\right)\epsilon\\
 & = & |p_{st}q_{st}| +{|p_{st}|^2(q_{st}q_\I^*+q_\I q_{st}^*) +
|q_{st}|^2(p_{st}p_\I^*+p_\I p_{st}^*) \over 2|p_{st}| |q_{st}| }\epsilon\\
& = & |d_{st}| +{|p_{st}|^2(q_{st}q_\I^*+q_\I q_{st}^*) +
|q_{st}|^2(p_{st}p_\I^*+p_\I p_{st}^*) \over 2|d_{st}| }\epsilon.
\end{eqnarray*}
Thus, to show that $|pq| = |p||q|$ in this case, it suffices to show that
\begin{equation} \label{e9}
d_{st}d_\I^*+d_\I d_{st}^* = |p_{st}|^2(q_{st}q_\I^*+q_\I q_{st}^*) +
|q_{st}|^2(p_{st}p_\I^*+p_\I p_{st}^*).
\end{equation}
We have $d_{st} = p_{st}q_{st}$ and $d_\I = p_\I q_{st} + p_{st}q_\I$.  Then
$d_{st}^* = q_{st}^*p_{st}^*$ and $d_\I^* = q_{st}^*p_\I^* + q_\I^*p_{st}^*$.
From these, (\ref{e9}) can be derived.  Thus, $|pq| = |p| |q|$ in this case.

If $p_{st} = 0$ and $q_{st} \not = 0$, then $pq = p_\I q_{st}\epsilon$.  This implies that $|pq| = |p_\I||q_{st}|\epsilon$.  On the other hand, we have $p = p_\I \epsilon$, which implies that $|p| = |p_\I |\epsilon$.
Since
$$|q| =  |q_{st}| + {(q_{st}q_\I^*+q_\I q_{st}^*) \over 2|q_{st}|}\epsilon,$$
we have $|p| |q| = |p_\I||q_{st}|\epsilon = |pq|$.

Similarly, if $p_{st} \not = 0$ and $q_{st} = 0$, we also have $|pq| = |p| |q|$.

If $p_{st} = q_{st} = 0$, then $p = p_\I \epsilon$, $|p|= |p_\I| \epsilon$,  $q = q_\I \epsilon$, $|q|= |q_\I| \epsilon$.   We have $|pq| = 0 = |p| |q|$.
Thus, $|pq| = |p| |q|$ in all the cases.  This proves Conclusion 4.

5.  We have
$$|p| =  |p_{st}| + {(p_{st}p_\I^*+p_\I p_{st}^*) \over 2|p_{st}|}\epsilon,$$
$$|q| =  |q_{st}| + {(q_{st}q_\I^*+q_\I q_{st}^*) \over 2|q_{st}|}\epsilon,$$
$$|p+q|  =  |p_{st}+q_{st}| + {(p_{st}+q_{st})(p_\I^*+q_\I^*)+(p_\I+q_\I)(p_{st}^*+q_{st}^*) \over 2|p_{st}+q_{st}|}\epsilon.$$
We may also divide to four cases, namely, a. $p_{st} \not = 0$ and $q_{st} \not = 0$, b. $p_{st} = 0$ and $q_{st} \not = 0$, c. $p_{st} \not = 0$ and $q_{st} = 0$, and d. $p_{st} = q_{st} = 0$, to prove Conclusion 5.   We omit the technical details here.
\end{proof}

If $|q|=1$, then $q$ is called a unit dual quaternion, which plays an important role to represent the motion of a rigid body in 3D \cite{BK20, CKJC16, Ke12}.  Thus, $q$ is a unit dual quaternion if and only if $|q_{st}| = 1$ and
$$q_{st}q_\I^*+q_\I q_{st}^* = 0.$$
This matches the definition of unit dual quaternions in applications \cite{BK20, CKJC16, Ke12}.

\section{Norms of Dual Quaternion Vectors}

Denote the collection of $n$-dimensional quaternion  vectors by ${\mathbb {Q}}^n$, and the collection of $n$-dimensional dual quaternion  vectors by ${\mathbb {DQ}}^n$.  For $\vx = (x_1, x_2,\cdots, x_n)^\top \in {\mathbb {Q}}^n$ with $x_i = (x_i)_0 + (x_i)_1\ii + (x_i)_2\jj + (x_i)_3\kk$ for $(x_i)_j \in \mathbb R$, $i = 1, 2,\cdots, n, j = 0, 1, 2, 4$, denote
$$\vx^R = ((x_1)_0, (x_1)_1, (x_1)_2, (x_1)_3, (x_2)_0, \cdots, (x_n)_3)^\top \in \mathbb {R}^{4n}.$$
Then
$$\|\vx\|_2 \equiv \sqrt{\sum_{i=1}^n |x_i|^2} = \sqrt{\sum_{i=1}^n \left[(x_i)_0^2 + (x_i)_1^2 + (x_i)_2^2 +(x_i)_3^2\right]} \equiv \|\vx^R\|_2.$$
This connects the $2$-norms of real vectors and quaternion vectors, and will be useful for us to discuss the $2$-norm of dual quaternion vectors.

For $\vx = (x_1, x_2,\cdots, x_n)^\top, \vy = (y_1, y_2,\cdots, y_n)^\top  \in {\mathbb {Q}}^n$, define $\vx^*\vy = \sum_{i=1}^n x_i^*y_i$, where $\vx^* = (x_1^*, x_2^*,\cdots, x_n^*)$ is the conjugate transpose of $\vx$.
By Theorem \ref{t4.2}, we have the following proposition.

\begin{Prop} \label{p6.1}
For $\vx, \vy \in {\mathbb {Q}}^n$, we have $\vx^* \vy + \vy^*\vx \le 2\|\vx\|_2\|\vy\|_2$.
\end{Prop}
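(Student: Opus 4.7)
The plan is to reduce the statement to the real Cauchy--Schwarz inequality by using the identification $\vx \mapsto \vx^R$ together with Theorem \ref{t4.2}.

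First, I would apply Theorem \ref{t4.2} componentwise. For each index $i$, that theorem gives $x_i^* y_i + y_i^* x_i = 2((x_i)_0 (y_i)_0 + (x_i)_1 (y_i)_1 + (x_i)_2 (y_i)_2 + (x_i)_3 (y_i)_3)$, which is a real number. Summing over $i$ and recognizing the right-hand side as $2 (\vx^R)^\top \vy^R$, I obtain
\[
\vx^* \vy + \vy^* \vx \;=\; \sum_{i=1}^n \bigl(x_i^* y_i + y_i^* x_i\bigr) \;=\; 2(\vx^R)^\top \vy^R.
\]

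Next, I would invoke the classical Cauchy--Schwarz inequality for real vectors in $\mathbb{R}^{4n}$ to conclude $(\vx^R)^\top \vy^R \le \|\vx^R\|_2 \|\vy^R\|_2$. Finally, using the identity $\|\vx\|_2 = \|\vx^R\|_2$ (and similarly for $\vy$) already established in the paragraph preceding the proposition, I conclude
\[
\vx^* \vy + \vy^* \vx \;\le\; 2 \|\vx^R\|_2 \|\vy^R\|_2 \;=\; 2 \|\vx\|_2 \|\vy\|_2.
\]

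There is no real obstacle in this argument: Theorem \ref{t4.2} is exactly what is needed to recast the quaternionic mixed product as a real inner product, after which the inequality is immediate from the real case. The only thing to be careful about is that the theorem applies to the sum $x_i^* y_i + y_i^* x_i$ rather than either term alone (since individually these are quaternions, not reals), so one must sum both mixed products before extracting a real bilinear form.
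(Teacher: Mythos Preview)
Your proposal is correct and follows essentially the same route as the paper's own proof: apply Theorem~\ref{t4.2} componentwise, sum to obtain $\vx^*\vy+\vy^*\vx = 2(\vx^R)^\top\vy^R$, then invoke the real Cauchy--Schwarz inequality together with $\|\vx\|_2 = \|\vx^R\|_2$. There is nothing to add.
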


\begin{proof}
By Theorem \ref{t4.2}, we know $$x_i^*y_i+y_i^*x_i= 2[(x_i)_0(y_i)_0 + (x_i)_1(y_i)_1+(x_i)_2(y_i)_2+(x_i)_3(y_i)_3].$$
Consequently, it holds that
$$
\vx^* \vy + \vy^*\vx =\sum_{i=1}^n(x_i^*y_i+y_i^*x_i)=2(\vx^R)^\top\vy^R  \le 2\|\vx^R\|_2\|\vy^R\|_2
= 2\|\vx\|_2\|\vy\|_2.$$
\end{proof}

For $\vx \in
{\mathbb {DQ}}^n$, we may also write
$$\vx = \vx_{st} + \vx_\I\epsilon,$$
where $\vx_{st}, \vx_\I \in \mathbb {Q}^n$ are the standard part and the infinitesimal part of $\vx$ respectively.

A function $v: {\mathbb {DQ}}^n \to \mathbb D$ is called a norm on ${\mathbb {DQ}}^n$ if it satisfies the following three properties:

1. For any $\vx \in {\mathbb {DQ}}^n$, $v(\vx) \ge 0$, and $v(\vx) = 0$ if and only if $\vx = \0$;

2. For any $\vx \in {\mathbb {DQ}}^n$ and $q \in \mathbb {DQ}$, $v(q\vx) = |q|v(\vx)$;

3. For any $\vx, \vy \in {\mathbb {DQ}}^n$, $v(\vx+\vy) \le v(\vx)+v(\vy)$.

Suppose that $\vx = (x_1, x_2,\cdots, x_n)^\top$.   We may extend the $1$-norm and $\infty$-norm to dual quaternion vectors as follows:
\begin{equation} \label{e10}
\|\vx\|_1 = \sum_{i=1}^n |x_i|,
\end{equation}
and
\begin{equation} \label{e11}
\|\vx\|_\infty = \max_{i=1,2, \cdots, n} |x_i|.
\end{equation}

For $i=1, 2,\cdots, n$, we have
\begin{equation} \label{e14}
|x_i| = \left\{ \begin{aligned} |(x_i)_{st}| + {(x_i)_{st}(x_i)_\I^*+(x_i)_\I (x_i)_{st}^* \over 2|(x_i)_{st}|}\epsilon, & \ {\rm if}\  (x_i)_{st} \not = 0, \\
|(x_i)_\I|\epsilon, &  \ {\rm otherwise}.
\end{aligned} \right.
\end{equation}

\begin{Prop} \label{p6.2}
The $1$-norm and the $\infty$-norm, defined above satisfy the three properties of norms.
\end{Prop}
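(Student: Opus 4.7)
The plan is to reduce every norm axiom for $\|\cdot\|_1$ and $\|\cdot\|_\infty$ to the corresponding coordinate-wise statement about the magnitude $|\cdot|$ on $\mathbb{DQ}$ proved in Theorem \ref{t5.1}, together with basic order-compatibility properties of the total order on $\mathbb D$ introduced in Section 2. So I would first collect the needed ``bookkeeping'' facts about the dual order: if $a \le b$ in $\mathbb D$ then $a+c \le b+c$ for any $c \in \mathbb D$, and if $c \ge 0$ then $ca \le cb$. The second follows immediately from Theorem 2.1(3) applied to $c$ and $b-a$; the first follows from the definition of the order via (\ref{e1}). These two facts ensure that sums and $\max$-operations over a finite collection of dual numbers behave in the familiar way.

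For the $1$-norm, nonnegativity and the zero-characterisation follow term by term from Theorem \ref{t5.1}(3), since a finite sum of nonnegative dual numbers is nonnegative and vanishes only when every summand does. Homogeneity is a direct consequence of the multiplicativity $|qx_i| = |q||x_i|$ from Theorem \ref{t5.1}(4), factoring $|q|$ out of the sum. The triangle inequality follows by summing the coordinate inequalities $|x_i+y_i| \le |x_i|+|y_i|$ given by Theorem \ref{t5.1}(5), using additivity of the order.

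For the $\infty$-norm, the first property is immediate: each $|x_i|$ is nonnegative, and since the order on $\mathbb D$ is total, $\max_i |x_i|$ is well defined and equals $0$ precisely when every $|x_i| = 0$. Homogeneity needs a small extra step: since $|q| \ge 0$, the order-compatibility fact above gives $|q||x_i| \le |q|\max_j |x_j|$ for every $i$, so $\max_i |q||x_i| \le |q|\max_j|x_j|$; the reverse inequality comes from picking the index that realises the maximum of $|x_j|$. Combined with Theorem \ref{t5.1}(4), this yields $\|q\vx\|_\infty = |q|\|\vx\|_\infty$. For the triangle inequality, apply Theorem \ref{t5.1}(5) coordinate-wise and then use $|x_i|+|y_i| \le \max_j|x_j| + \max_j|y_j|$, which comes from adding the two order-preserving inequalities.

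The one place where some care is required is that the total order on $\mathbb D$ is not Archimedean and contains infinitesimal elements, so a priori one cannot take for granted that $\max$ distributes over multiplication by an arbitrary nonnegative dual scalar. The main obstacle is therefore simply confirming the two order-compatibility lemmas stated in the first paragraph; once those are in hand the rest of Proposition \ref{p6.2} is a routine unpacking of Theorem \ref{t5.1}. I would present the $1$-norm case fully and then note that the $\infty$-norm case proceeds in the same fashion, with $\max$ playing the role of $\sum$ and with the two order-compatibility facts invoked explicitly.
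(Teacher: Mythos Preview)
Your proposal is correct and follows essentially the same approach as the paper: reduce each norm axiom coordinate-wise to the corresponding property of $|\cdot|$ from Theorem~\ref{t5.1}. The paper gives the $1$-norm argument explicitly (in the same way you outline) and then simply asserts that the $\infty$-norm proof ``is also the same as in the real vector space,'' omitting details; your version is actually more careful in isolating the two order-compatibility lemmas ($a\le b \Rightarrow a+c\le b+c$; $c\ge 0,\ a\le b \Rightarrow ca\le cb$) and in spelling out why $\max$ commutes with multiplication by a nonnegative dual scalar, which the paper leaves implicit.
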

\begin{proof}

Consider the $1$-norm first. Let $\vx \in {\mathbb {DQ}}^n$.  If $\vx = \0$, then $x_i = 0$ for $i=1, 2,\cdots, n$.   By (\ref{e14}), we have $|x_i| = 0$ for $i=1, 2,\cdots, n$.  By (\ref{e10}), we have $\|\vx\|_1 = 0$. On the other, if $\|\vx\|_1 = 0$, then by (\ref{e10}), we have $|x_i| = 0$ for $i=1, 2,\cdots, n$, as $|x_i| \ge 0$ for $i=1, 2,\cdots, n$. By Theorem \ref{t5.1}, we have $x_i = 0$ for $i=1, 2,\cdots, n$.  Hence $\vx = \0$.  This proves Property 1 for the $1$-norm.  Now, let $q \in \mathbb {DQ}$.   We have
$$\|q\vx\|_1 = \sum_{i=1}^n |qx_i| = \sum_{i=1}^n |q| |x_i| = |q| \sum_{i=1}^n |x_i| = |q|\|\vx\|_1.$$
Then, Property 2 of the norm holds for the $1$-norm.  Finally, for $\vx, \vy \in {\mathbb {DQ}}^n$, we have
$$\|\vx+\vy\|_1 = \sum_{i=1}^n |x_i+y_i| \le \sum_{i=1}^n (|x_i|+|y_i|) = \sum_{i=1}^n |x_i| + \sum_{i=1}^n |y_i| = \|\vx\|_1+\|\vy\|_1.$$
This proves Property 3 for the $1$-norm.  Hence, the $1$-norm is a norm.

We see that the proof for the $1$-norm is the same as in the real vector space.  The proof for the $\infty$-norm is also the same as in the real vector space.   Hence, we omit the details here.
\end{proof}

However, for $2$-norm, we may not simply define
\begin{equation} \label{e12}
\|\vx\|_2 = \sqrt{\sum_{i=1}^n |x_i|^2}.
\end{equation}
We should define $\|\vx\|_2$ by (\ref{e12}) if not all of $x_i$ are infinitesimal.   If all $x_i$ are infinitesimal, we have $x_i = (x_i)_\I \epsilon$ for $i = 1, 2,\cdots, n$.  Then we define
\begin{equation} \label{e13}
\|\vx\|_2 = \sqrt{\sum_{i=1}^n |(x_i)_\I |^2} \epsilon.
\end{equation}

\begin{Prop} \label{p6.3}
For any $\vx =\vx_{st} + \vx_\I\epsilon\in{\mathbb {DQ}}^n$ with $\vx_{st}\neq \0$, it holds that
\begin{equation}\label{e12-1}
\|\vx\|_2=\|\vx_{st}\|_2+\frac{(\vx_{st}^R)^\top \vx_\I^R }{\|\vx_{st}\|_2}\epsilon\leq \|\vx_{st}\|_2+\|\vx_\I\|_2\epsilon.
\end{equation}
\end{Prop}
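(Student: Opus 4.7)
The plan is to expand $\|\vx\|_2^2 = \sum_{i=1}^n |x_i|^2$ in dual form, apply the square root formula (\ref{e4}), and then use the Cauchy--Schwarz inequality on the real vectors $\vx_{st}^R$ and $\vx_\I^R$. Since $\vx_{st}\ne\0$, not all $x_i$ are infinitesimal, so definition (\ref{e12}) applies and the square root will be well-defined because the standard part $\|\vx_{st}\|_2^2 > 0$ makes the radicand appreciable.

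First I would compute $|x_i|^2$ for each $i$ by splitting into the two cases of (\ref{e14}). If $(x_i)_{st}\ne 0$, squaring and killing the $\epsilon^2$ term yields
\[
|x_i|^2 \;=\; |(x_i)_{st}|^2 + \bigl((x_i)_{st}(x_i)_\I^* + (x_i)_\I (x_i)_{st}^*\bigr)\epsilon.
\]
If $(x_i)_{st}=0$, then $|x_i|^2 = |(x_i)_\I|^2\epsilon^2 = 0$, and the right-hand side above also vanishes trivially because every factor involving $(x_i)_{st}$ is zero. So the formula holds uniformly. By Theorem \ref{t4.2}, $(x_i)_{st}(x_i)_\I^* + (x_i)_\I (x_i)_{st}^* = 2\sum_{j=0}^3 (x_i)_{st,j}(x_i)_{\I,j}$, which is real. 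Summing over $i$ gives
\[
\sum_{i=1}^n |x_i|^2 \;=\; \|\vx_{st}\|_2^2 + 2(\vx_{st}^R)^\top \vx_\I^R\,\epsilon,
\]
and this is an appreciable nonnegative dual number because $\|\vx_{st}\|_2^2>0$.

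Next I would apply the dual square root formula (\ref{e4}) to obtain
\[
\|\vx\|_2 \;=\; \sqrt{\|\vx_{st}\|_2^2 + 2(\vx_{st}^R)^\top \vx_\I^R\,\epsilon} \;=\; \|\vx_{st}\|_2 + \frac{(\vx_{st}^R)^\top \vx_\I^R}{\|\vx_{st}\|_2}\epsilon,
\]
which is the claimed equality. For the inequality, I would invoke the Cauchy--Schwarz inequality in $\mathbb R^{4n}$:
\[
(\vx_{st}^R)^\top \vx_\I^R \;\le\; \|\vx_{st}^R\|_2\,\|\vx_\I^R\|_2 \;=\; \|\vx_{st}\|_2\,\|\vx_\I\|_2,
\]
so that the infinitesimal parts satisfy $\frac{(\vx_{st}^R)^\top \vx_\I^R}{\|\vx_{st}\|_2} \le \|\vx_\I\|_2$. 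Since the standard parts on both sides of the target inequality agree (both equal $\|\vx_{st}\|_2$), the total order on $\mathbb D$ reduces the comparison to the infinitesimal parts, completing the proof.

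The only mildly delicate step is verifying that the unified expression for $|x_i|^2$ is legitimate when $(x_i)_{st}=0$; once that is observed the rest is direct computation plus one invocation of Cauchy--Schwarz, so I do not expect any real obstacle.
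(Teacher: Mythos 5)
Your proof is correct and follows essentially the same route as the paper: expand $\sum_i |x_i|^2$ using (\ref{e14}) (noting the infinitesimal components contribute nothing), convert the mixed terms to $2(\vx_{st}^R)^\top\vx_\I^R$ via Theorem \ref{t4.2}, apply the dual square root formula (\ref{e4}), and finish with Cauchy--Schwarz. No gaps.
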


\begin{proof}
Since $\vx_{st}\neq \0$, by (\ref{e14}) and (\ref{e12}), we have
$$
\begin{array}{lll}
\|\vx\|_2&=&\displaystyle\sqrt{\sum_{i=1}^n|x_i|^2}\\
&=&\displaystyle\sqrt{\sum_{(x_i)_{st}\neq0}\left(|(x_i)_{st}|+\frac{(x_i)_{st}(x_i)_\I^*+(x_i)_\I (x_i)_{st}^*}{2|(x_i)_{st}|}\epsilon\right)^2+\sum_{(x_i)_{st}=0}|(x_i)_\I\epsilon|^2}\\
&=&\displaystyle\sqrt{\sum_{(x_i)_{st}\neq0}\left(|(x_i)_{st}|^2+((x_i)_{st}(x_i)_\I^*+(x_i)_\I (x_i)_{st}^*)\epsilon\right)}.
\end{array}
$$
Since $(x_i)_{st}^*=0$ when $(x_i)_{st}=0$, we further have
$$
\begin{array}{lll}
\|\vx\|_2&=&\displaystyle\sqrt{\sum_{i=1}^n|(x_i)_{st}|^2+\left(\sum_{i=1}^n((x_i)_{st}(x_i)_\I^*+(x_i)_\I (x_i)_{st}^*)\right)\epsilon}\\
&=&\displaystyle\sqrt{\|\vx_{st}\|_2^2+\left(\sum_{i=1}^n((x_i)_{st}(x_i)_\I^*+(x_i)_\I (x_i)_{st}^*)\right)\epsilon}.
\end{array}
$$
Consequently, by (\ref{e4}) and Theorem \ref{t4.2}, we have
$$
\begin{array}{lll}
\|\vx\|_2&=&\|\vx_{st}\|_2+\displaystyle\frac{\sum_{i=1}^n((x_i)_{st}(x_i)_\I^*+(x_i)_\I (x_i)_{st}^*)}{2\|\vx_{st}\|_2}\epsilon\\
&=&\|\vx_{st}\|_2+\displaystyle\frac{\sum_{i=1}^n((x_i)_{st}^*(x_i)_\I+(x_i)_\I^* (x_i)_{st})}{2\|\vx_{st}\|_2}\epsilon\\
&=&\|\vx_{st}\|_2+\displaystyle\frac{\vx_{st}^*\vx_\I+\vx_\I^*\vx_{st}}{2\|\vx_{st}\|_2}\epsilon\\
&=&\|\vx_{st}\|_2+\displaystyle\frac{(\vx_{st}^R)^\top\vx_\I^R}{\|\vx_{st}\|_2}\epsilon,
\end{array}
$$
which means that the equality in (\ref{e12-1}) holds. Finally, the inequality in (\ref{e12-1}) follows from the fact that $(\vx_{st}^R)^\top\vx_\I^R\leq \|\vx_{st}\|_2\|\vx_\I\|_2$.
\end{proof}

\begin{Thm} \label{t6.3}
The $2$-norm, defined by (\ref{e12}) and (\ref{e13}), satisfies the three properties of norms.
\end{Thm}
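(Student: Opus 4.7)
The plan is to verify the three norm axioms separately, driving the case analysis by which of $\vx_{st}$, $\vy_{st}$, and $\vx_{st}+\vy_{st}$ vanish. The workhorse will be Proposition~\ref{p6.3}, which delivers the clean dual-number formula $\|\vx\|_2 = \|\vx_{st}\|_2 + (\vx_{st}^R)^\top \vx_\I^R/\|\vx_{st}\|_2\cdot\epsilon$ whenever $\vx_{st}\neq\0$, combined with the classical triangle and Cauchy--Schwarz inequalities on $\mathbb R^{4n}$ imported through the map $\vx\mapsto \vx^R$ (as used already in Proposition~\ref{p6.1}).

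Property 1 reads directly off (\ref{e12}) and (\ref{e13}): nonnegativity is built into the total order, and $\|\vx\|_2=0$ forces either all infinitesimal parts of the $x_i$ to vanish in the second branch, or the standard-part Euclidean sum to vanish in the first branch. For Property 2, I would split on whether $q_{st}$ and $\vx_{st}$ vanish. Whenever at least one does, the product $q\vx$ collapses to an infinitesimal quaternion vector (or to $\0$), and $\epsilon^2=0$ reduces the identity $\|q\vx\|_2 = |q|\|\vx\|_2$ to a scalar-times-vector 2-norm identity on $\mathbb R^{4n}$. In the generic case both sides are given by Proposition~\ref{p6.3}: standard parts match because $\|q_{st}\vx_{st}\|_2 = |q_{st}|\|\vx_{st}\|_2$, and dual parts match after expanding $(q_{st}\vx_{st})^*(q_{st}\vx_\I+q_\I\vx_{st})$ plus its conjugate, using $q_{st}^*q_{st}=|q_{st}|^2$ and invoking Theorem~\ref{t4.2} to pull the real scalar $q_{st}^*q_\I + q_\I^* q_{st}$ past $\vx_{st}^*\vx_{st} = \|\vx_{st}\|_2^2$.

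Property 3 is the delicate one, and I would partition into four cases by whether $\vx_{st}$ and $\vy_{st}$ vanish, further subdividing the doubly-appreciable case by whether $\vx_{st}+\vy_{st}$ vanishes. If both $\vx_{st}$ and $\vy_{st}$ vanish, the inequality is the classical triangle inequality for quaternion vectors, scaled by $\epsilon$. If exactly one vanishes, Proposition~\ref{p6.3} applies to two of the three terms and the required inequality collapses via Cauchy--Schwarz $(\vx_{st}^R)^\top\vy_\I^R \le \|\vx_{st}\|_2\|\vy_\I\|_2$ on $\mathbb R^{4n}$. If both are appreciable but $\vx_{st}+\vy_{st}=\0$, then $\|\vx+\vy\|_2$ is infinitesimal while $\|\vx\|_2+\|\vy\|_2$ has strictly positive standard part $2\|\vx_{st}\|_2$, so the total order on $\mathbb D$ finishes immediately.

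The hard part will be the fully generic subcase where $\vx_{st}$, $\vy_{st}$, and $\vx_{st}+\vy_{st}$ are all nonzero and Proposition~\ref{p6.3} applies uniformly to all three norms. The standard parts are controlled by the $\mathbb R^{4n}$ triangle inequality $\|\vx_{st}^R+\vy_{st}^R\|_2\le\|\vx_{st}^R\|_2+\|\vy_{st}^R\|_2$; when this is strict, the total order on $\mathbb D$ yields the conclusion regardless of the dual coefficients. The remaining obstacle is the equality case, which forces $\vy_{st}^R = t\vx_{st}^R$ for some $t\ge 0$. Substituting this identity into the three dual coefficients produced by Proposition~\ref{p6.3} cancels the factor $(1+t)$ between numerator and denominator of each, so all three dual parts collapse to the common value $(\vx_{st}^R)^\top(\vx_\I+\vy_\I)^R/\|\vx_{st}\|_2$, yielding equality rather than strict inequality at the dual level and completing the verification.
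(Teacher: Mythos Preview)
Your proposal is correct and follows essentially the same route as the paper: the same case split driven by which of $\vx_{st}$, $\vy_{st}$, $\vx_{st}+\vy_{st}$ vanish, the same use of Proposition~\ref{p6.3} to reduce everything to the $\mathbb R^{4n}$ picture, and the same handling of the equality subcase $\vy_{st}=t\vx_{st}$ to close the triangle inequality. The only cosmetic differences are that for Property~2 in the generic case you compare dual parts via Proposition~\ref{p6.3} directly (the paper instead invokes $|qx_i|=|q|\,|x_i|$ from Theorem~\ref{t5.1} inside the sum), and your outer case split for Property~3 is ordered by vanishing of $\vx_{st},\vy_{st}$ first rather than by strict-versus-equality in the standard-part triangle inequality first---but the subcases covered and the arguments within each are the same.
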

\begin{proof} By (\ref{e12}) and (\ref{e13}), if $\vx = \0$, then $\|\vx\|_2 = 0$.  On the other hand, assume that $\|\vx\|_2 = 0$.  If one of $x_i$ is appreciable, by (\ref{e12}), we have
$|x_i| = 0$ for $i=1, 2,\cdots, n$.   This implies that $x_i = 0$ for $i=1, 2,\cdots, n$.  Hence $\vx = \0$.   If all of $x_i$ are infinitesimal, i.e., $\vx_{st}=\0$, then by (\ref{e13}), we have
$|(x_i)_\I | = 0$ for $i=1, 2,\cdots, n$.  By Proposition \ref{p4.1}, this means that $(x_i)_\I  = 0$ for $i=1, 2,\cdots, n$.   Hence, $x_i = 0$ for $i=1, 2,\cdots, n$, i.e., $\vx = \0$.   Thus, Property 1 holds for the $2$-norm.

Assume that $q \in \mathbb {DQ}$ is appreciable.  If at least one $x_i$ is appreciable, then the corresponding $qx_i$ is also appreciable.   By (\ref{e12}),
$$\|q\vx\|_2 = \sqrt{\sum_{i=1}^n |qx_i|^2}.$$
By Theorem \ref{t5.1}, $|qx_i| = |q| |x_i|$.   Hence,
$$\|q\vx\|_2 =\sqrt{|q|^2\sum_{i=1}^n |x_i|^2} = \sqrt{|q|^2\|\vx\|_2^2}.$$
By Theorem \ref{t5.1}, $|x_i| \ge 0$ for all $i$.   Then $|x_i|^2 \ge 0$ for all $i$.  By (\ref{e7}), $|x_i|$, hence $|x_i|^2$ is appreciable, if $x_i$ is appreciable.  This implies that $\sum_{i=1}^n |x_i|^2 \ge 0$ and is appreciable.   Since $q$ is appreciable, by (\ref{e7}), $|q| \ge 0$ and is also appreciable.
Thus, $|q|^2 \ge 0$ and is appreciable.  Then, $|q|^2\|\vx\|_2^2 \ge 0$ and is also appreciable.  We have
$$\|q\vx\|_2 = \sqrt{|q|^2\|\vx\|_2^2} = |q|\|\vx\|_2.$$
If all $x_i$ are infinitesimal, then all $qx_i$ are also infinitesimal.   By (\ref{e13}), we have
$$\|q\vx\|_2  =  \sqrt{\sum_{i=1}^n |q_{st}(x_i)_\I |^2} \epsilon
 =  |q_{st}|\sqrt{\sum_{i=1}^n |(x_i)_\I |^2} \epsilon.$$
 Since $q_{st} \not = 0$, by (\ref{e7}), we have
 $$|q_{st}| = |q| - {q_{st}q_\I^*+q_\I q_st^* \over 2|q_{st}|}\epsilon.$$
 $$\|q\vx\|_2  = \left(|q| - {q_{st}q_\I^*+q_\I q_st^* \over 2|q_{st}|}\epsilon\right)\sqrt{\sum_{i=1}^n |(x_i)_\I |^2} \epsilon =  |q|\sqrt{\sum_{i=1}^n |(x_i)_\I |^2}\epsilon
 =  |q|\|\vx\|_2.$$
Assume now that $q$ is infinitesimal.  Then $q = q_\I \epsilon$, which implies that all $qx_i=q_\I (x_i)_{st}\epsilon$ are infinitesimal. Consequently, by (\ref{e13}), we have
\begin{equation} \label{e15}
\|q\vx\|_2  =  \sqrt{\sum_{i=1}^n |q_\I (x_i)_{st} |^2} \epsilon = |q_\I | \sqrt{\sum_{i=1}^n |(x_i)_{st} |^2} \epsilon=|q_\I | \|\vx_{st}\|_2 \epsilon.
\end{equation}
If $\vx_{st}\neq\0$, then by (\ref{e15}) and Proposition \ref{p6.3}, we have
$$\|q\vx\|_2  =  |q_\I | \left(\|\vx\|_2-\frac{(\vx_{st}^R)^\top \vx_\I^R }{\|\vx_{st}\|_2}\epsilon
\right)\epsilon=|q_\I | \|\vx\|_2\epsilon= |q|\|\vx\|_2.
$$
%As $(x_i)_{st}^* = 0$ if $(x_i)_{st} = 0$, we have
%$$\sum_{i=1}^n |x_i|^2 = \sum_{i=1}^n |(x_i)_{st}|^2 + \sum_{i=1}^n \left((x_i)_{st}(x_i)_\I^*+(x_i)_\I (x_i)_{st}^*\right)\epsilon.$$
%Thus,
%$$\sqrt{\sum_{i=1}^n |x_i|^2} = \sqrt{\sum_{i=1}^n |(x_i)_{st}|^2} + u\epsilon,$$
%where $u \in \mathbb Q$.   Then
%$$\sqrt{\sum_{i=1}^n |x_i|^2}\epsilon = \sqrt{\sum_{i=1}^n |(x_i)_{st}|^2}\epsilon.$$
%Combining this with (\ref{e15}), we have
%$$\|q\vx\|_2 = |q_\I |\sqrt{\sum_{i=1}^n |x_i|^2}\epsilon = |q|\|\vx\|_2.$$
If $\vx_{st}=\0$, then $q\vx= \0$, which implies $\|q\vx\|_2 = 0$ by (\ref{e13}).  But in this case, by (\ref{e13}), $\|\vx\|_2$ is also infinitesimal, which implies, together with the fact that $|q| = |q_\I |\epsilon$, that $|q|\|\vx\|_2 = 0 $. Hence $\|q\vx\|_2 =|q|\|\vx\|_2$. This proves that Property 2 holds for the $2$-norm.

Finally, let $\vx= \vx_{st} + \vx_\I \epsilon, \vy = \vy_{st} + \vy_\I \epsilon\in {\mathbb {DQ}}^n$.   We wish to prove that
\begin{equation} \label{e16}
\|\vx+\vy\|_2 \le \|\vx\|_2 + \|\vy\|_2.
\end{equation}
By the properties of the $2$-norm for quaternions \cite{WLZZ18}, we have
%$$\|\vx_{st}+\vy_{st}\|_2 \le \|\vx_{st}\|_2 + \|\vy_{st}\|_2,$$
%i.e.,
\begin{equation} \label{e17}
\|\vx_{st}+\vy_{st}\|_2 \le \|\vx_{st}\|_2 + \|\vy_{st}\|_2.
\end{equation}
If
\begin{equation} \label{e18}
\|\vx_{st}+\vy_{st}\|_2 < \|\vx_{st}\|_2 + \|\vy_{st}\|_2,
\end{equation}
then $\vx_{st}\neq\0$ and $\vy_{st}\neq\0$. If $\vx_{st}+\vy_{st}\neq \0$, then by Proposition \ref{p6.3}, we have
$$\|\vx+\vy\|_2= \|\vx_{st}+\vy_{st}\|_2+ u\epsilon,$$
where $u =(\vx_{st}^R+\vy_{st}^R)^\top (\vx_\I^R+\vy_\I^R)/\|\vx_{st}+\vy_{st}\|_2$. Since $\vx_{st}\neq\0$ and $\vy_{st}\neq\0$, by Proposition \ref{p6.3}, we have
$$
\|\vx\|_2+\|\vy\|_2= \|\vx_{st}\|_2+\|\vy_{st}\|_2+\left(\frac{(\vx_{st}^R)^\top\vx_\I^R}{\|\vx_{st}\|_2}+\frac{(\vy_{st}^R)^\top\vy_\I^R}{\|\vy_{st}\|_2}\right)\epsilon.
$$
By (\ref{e18}), we know %have
%$$\left(\|\vx+\vy\|_2\right)_{st} < \sqrt{\sum_{i=1}^n \left|(x_i)_{st}\right|^2} + \sqrt{\sum_{i=1}^n \left|(y_i)_{st}\right|^2} = (\|\vx\|_2)_{st}+ (\|\vy\|_2)_{st}.$$
%Thus,
that (\ref{e16}) holds.  If $\vx_{st}+\vy_{st}=\0$, then $\|\vx_{st}+\vy_{st}\|_2 = 0 < \|\vx_{st}\|_2+ \|\vy_{st}\|_2$. Thus, (\ref{e16}) still holds.

If $\|\vx_{st}+\vy_{st}\|_2 = \|\vx_{st}\|_2+\|\vy_{st}\|_2$, %(\ref{e18}) does not hold, then by (\ref{e17}), we have
%$$\sqrt{\sum_{i=1}^n \left|(x_i)_{st}+(y_i)_{st}\right|^2} = \sqrt{\sum_{i=1}^n \left|(x_i)_{st}\right|^2} + \sqrt{\sum_{i=1}^n \left|(y_i)_{st}\right|^2},$$
%i.e.,
%$$\|\vx_{st}+\vy_{st}\|_2 = \|\vx_{st}\|_2+\|\vy_{st}\|_2.$$
then by the argument at the beginning of this section, we know that
$$\|\vx_{st}^R+\vy_{st}^R\|_2 = \|\vx_{st}^R\|_2+\|\vy_{st}^R\|_2.$$
By the properties of the $2$-norm of real vectors, either $\vx_{st} = \0$ or $\vy_{st}=\0$ or there is a real positive number $t$ such that $\vy_{st} = t\vx_{st}$.    Hence, we divide this case to four subcases.

a. $\vx_{st} = \vy_{st} = \0$.   Then $\vx = \vx_\I \epsilon$ and $\vy = \vy_\I \epsilon$.  We have $\vx + \vy = (\vx_\I + \vy_\I )\epsilon$, and by (\ref{e13}),
$$\|\vx+\vy\|_2 = \|\vx_\I + \vy_\I \|_2\epsilon \le \left(\|\vx_\I \|_2 + \|\vy_\I \|_2 \right)\epsilon = \|\vx_\I \|_2 \epsilon + \|\vy_\I \|_2 \epsilon = \|\vx\|_2 + \|\vy\|_2.$$

b. $\vx_{st} = \0$ and $\vy_{st} \not = \0$.  Then $\vx + \vy = \vy_{st} + (\vx_\I +\vy_\I) \epsilon$. Since $\vy_{st} \not = \0$, by Proposition \ref{p6.3}, we have $\|\vx+\vy\|_2 = \|\vy_{st}\|_2 +v\epsilon$, where
$$v = \frac{(\vy_{st}^R)^\top (\vx_\I^R+\vy_\I^R) }{\|\vy_{st}\|_2}=\frac{(\vy_{st}^R)^\top \vx_\I^R}{\|\vy_{st}\|_2}+\frac{(\vy_{st}^R)^\top \vy_\I^R}{\|\vy_{st}\|_2}.$$
Consequently, we have
\begin{eqnarray*}
\|\vx+\vy\|_2 &=& \|\vy_{st}\|_2 +\frac{(\vy_{st}^R)^\top \vx_\I^R}{\|\vy_{st}\|_2}\epsilon+\frac{(\vy_{st}^R)^\top \vy_\I^R}{\|\vy_{st}\|_2}\epsilon\\
&=&\|\vy\|_2 +\frac{(\vy_{st}^R)^\top \vx_\I^R}{\|\vy_{st}\|_2}\epsilon\\
&\leq&\|\vy\|_2 +\|\vx_\I\|_2\epsilon\\
&=&\|\vy\|_2 +\|\vx\|_2,
\end{eqnarray*}
%$$\|\vy\|_2 = \sqrt{\sum_{i=1}^n |(y_i)_{st}|^2} + {\sum_{i=1}^n v_i \over 2\sqrt{\sum_{i=1}^n |(y_i)_{st}|^2}} = \|\vy_{st}\|_2 + {\sum_{i=1}^n v_i \over 2\|\vy_{st}\|_2}.$$
%Similarly, we have
%$$\|\vx+\vy\|_2 = \|\vy_{st}\|_2 + {\sum_{i=1}^n u_i \over 2\|\vy_{st}\|_2},$$
%where
%$$u_i = v_i + (y_i)_{st}(x_i)_\I^* +(x_i)_\I (y_i)_{st}^*,$$
%for $i = 1, \cdots, n$.
%Thus,
%$$\|\vx+\vy\|_2 = \|\vy\|_2 + {\sum_{i=1}^n w_i \over 2\|\vy_{st}\|_2},$$
%where
%$$w_i = (y_i)_{st}(x_i)_\I^* +(x_i)_\I (y_i)_{st}^*,$$
%for $i = 1, \cdots, n$.
where the last second inequality is due to the fact that $(\vy_{st}^R)^\top \vx_\I^R\leq \|\vy_{st}^R\|_2\|\vx_I^R\|_2=\|\vy_{st}\|_2\|\vx_I\|_2$.
%$$\sum_{i=1}^n w_i = \sum_{i=1}^n \left[(y_i)_{st}(x_i)_\I^* +(x_i)_\I (y_i)_{st}^*\right]  =  \vy_{st}\vx_\I^*+\vx_\I \vy_{st}^* \le \|\vy_{st}\|_2\|\vx_\I^*\|_2 + \|\vx_\I \|_2 \|\vy_{st}^*\|_2=2\|\vy_{st}|_2 \|\vx_\I \|_2.$$
Thus, we have (\ref{e16}).

c. $\vx_{st} \not = \0$ and $\vy_{st} = \0$.   By exchanging $\vx$ and $\vy$ in the subcase b, we also have (\ref{e16}).

d. $\vx_{st} \not = \0$ and $\vy_{st} = t\vx_{st}$ for a real positive number $t$. In this case, since $\vx + \vy = (1+t)\vx_{st} + (\vx_\I + \vy_\I )\epsilon$, by Proposition \ref{p6.3}, we have
\begin{eqnarray*}
\|\vx + \vy\|_2= (1+t)\|\vx_{st}\|_2 +\frac{(\vx_{st}^R)^\top(\vx_\I^R + \vy_\I^R )}{\|\vx_{st}\|_2}\epsilon=\|\vx_{st}\|_2 +\|\vy_{st}\|_2+ \frac{(\vx_{st}^R)^\top\vx_\I^R}{\|\vx_{st}\|_2}\epsilon+\frac{(\vy_{st}^R)^\top\vy_\I^R }{\|\vy_{st}\|_2}\epsilon,
\end{eqnarray*}
where the second equality comes from $\vy_{st} = t\vx_{st}$. By Proposition \ref{p6.3} again, we know that (\ref{e16}) holds. Thus, Property 3 holds for the 2-norm.
%\begin{eqnarray*}
%\|\vx+\vy\|_2 & = & \sqrt{\sum_{i=1}^n |x_i+y_i|^2}\\
%& = & \sqrt{\sum_{i=1}^n |(1+t)(x_i)_{st}+ ((x_i)_\I+(y_i)_\I)\epsilon |^2}\\
%& = & \sqrt{\sum_{i=1}^n \left(|(1+t)(x_i)_{st}|+ ((x_i)_\I+(y_i)_\I)\epsilon |\right)^2}\\
%& = & \sqrt{\sum_{i=1}^n \left(|(1+t)(x_i)_{st}|+ ((x_i)_\I+(y_i)_\I)\epsilon |\right)^2}\\
%& = & \sqrt{\sum_{i=1}^n |(1+t)^2(x_i)_{st}|^2+ \sum_{i=1}^n ((x_i)_\I+(y_i)_\I)\epsilon |)^2}\\
%\end{eqnarray*}
\end{proof}

For any $\vx \in {\mathbb {DQ}}^n$, it is not difficult to show that $\|\vx\|_\infty \le \|\vx\|_2 \le \|\vx\|_1$.

For $\vx = (x_1, x_2,\cdots, x_n)^\top, \vy = (y_1, y_2,\cdots, y_n)^\top  \in {\mathbb {DQ}}^n$, let the conjugate transpose of $\vx$ be $\vx^* = (x_1^*, x_2^*,\cdots, x_n^*)$, and define $\vx^*\vy = \sum_{i=1}^n x_i^*y_i$.  If $\vx^*\vy = 0$, then we say that $\vx$ and $\vy$ are orthogonal.  It is not difficult to show that $\vx^*\vx = 1$ if and only if $\|\vx\|_2 =1$.  In this case, we say that $\vx$ is a unit dual quaternion vector.  If $\vx^{(1)}, \vx^{(2)},\cdots, \vx^{(n)} \in {\mathbb {DQ}}^n$, and $\left(\vx^{(i)}\right)^*\vx^{(j)} = \delta_{ij}$ for $i, j = 1, 2,\cdots, n$, where $\delta_{ij}$ is the Kronecker symbol, then we say that $\left\{\vx^{(1)}, \vx^{(2)},\cdots, \vx^{(n)}\right\}$ is an orthonormal basis of ${\mathbb {DQ}}^n$.

\section{Final Remarks}

In the study of robotics, dual quaternion optimization problems are studied \cite{BK20, CKJC16}.   In such
dual quaternion optimization problems, are the variables of the functions involved dual quaternion vectors?   Are those functions real valued or dual number valued?   How to analyze such optimization problems and their algorithms?   A further study is needed to address these problems.

The further study and applications of dual numbers, dual complex numbers and dual quaternions inevitably lead to the study on dual number matrices, dual complex matrices, dual quaternion matrices and their spectral theories \cite{Br20, Gu21, QL21}.   In particular, recently, Gutin \cite{Gu21} studied spectral theory and singular value decomposition of dual number matrices, Qi and Luo \cite{QL21} studied  spectral theory and singular value decomposition of dual complex matrices.   What about the spectral theory of dual quaternion matrices?  This may be also worth further studying.

\bigskip

%{\bf Acknowledgment}   We are thankful to Chen Ling for his comments.

%{\bf Data availability statement}    The datasets generated during and/or analysed during the current study are available from the corresponding author on reasonable request.

% \vspace{100pt}

\end{document}